\newcommand{\be}{\begin{equation}}
\newcommand{\en}{\end{equation}}
\newcommand{\bea}{\begin{eqnarray}}
\newcommand{\ena}{\end{eqnarray}}
\newcommand{\beano}{\begin{eqnarray*}}
\newcommand{\enano}{\end{eqnarray*}}
\newcommand{\bee}{\begin{enumerate}}
\newcommand{\ene}{\end{enumerate}}
\newcommand{\Hil}{{\cal H}}
\newcommand{\F}{{\cal F}}
\newcommand{\J}{{\cal J}}
\newcommand{\E}{{\cal E}}
\newcommand{\lmin}{{ l}}
\newcommand{\1}{1 \!\! 1}
\newenvironment{proof}{\noindent {\bf Proof:}}{\hfill$\Box$}
\begin{document}
\begin{frontmatter}

\title{Projector operators in clustering}

\author[autore,autore2]{F.Bagarello}
\ead{fabio.bagarello@unipa.it}
\author[autore]{M. Cinà}
\ead{marcocina89@gmail.com}
\author[autore]{F. Gargano}
\ead{francesco.gargano@unipa.it}
\journal{: Mathematical Methods in the Applied Sciences}
\maketitle
\date{}
\address[autore]{Dept. of Energy, Engineering of Information and Mathematical Models, University of Palermo, Italy.}

\address[autore2]{I.N.F.N, Torino, Italy.}

\begin{abstract}
\noindent In a recent paper the notion of {\em quantum perceptron} has been introduced in connection with projection operators. Here we extend this idea, using these kind of operators to produce a {\em clustering machine}, i.e. a  framework which generates different clusters from a set of input data. Also, we consider what happens when the orthonormal bases first used in the definition of the projectors are replaced by frames, and how these can be useful when trying to connect  some noised signal to a given cluster.
\end{abstract}

\end{frontmatter}

\vspace{-6pt}
\section{Introduction}\label{sec:intro}

As it is well known, a single perceptron cannot be used to model a XOR, since the possible outputs are not separated by one line in the
${\Bbb R}^2$ plane,  \cite{MM69}. Of course, this difficulty can be overcame easily by using more perceptrons together, that is, by using a (very simple) artificial neural network.
In a recent paper, \cite{quper}, the author suggested a possible solution which he  called {\em quantum perceptron},
mainly because he uses tools which are quite common in quantum mechanics, and in particular orthogonal bases, projector operators,
resolutions of the identity, and so on. However, since nothing really {\em quantum} appears in his treatment, (no Schr\"odinger dynamics, no Heisenberg picture, no uncertainty relation,...), we will not follow his terminology here. Nevertheless, following the main ideas proposed in \cite{quper}, in this paper we try to propose a systematic way to use tools arising
in functional analysis and operator theory, and used in the mathematical framework of quantum mechanics, for clustering problems,
like the one of the XOR. We refer to \cite{mes} and \cite{rs} as basic textbooks on quantum mechanics
and functional analysis respectively. We will see that these tools give rise to several possible procedures which can be of some utility when trying to identify
or collect objects (i.e. signals), that are vectors in some Hilbert space.
In particular, we shall construct some projector operators which allows to introduce an extended concept of distance that
can be used also in clustering procedure, i.e. in procedures where a generic input signal need to be
classified according to some \textit{similarity} properties characterizing  the signals we are analyzing. We shall apply our theoretical framework
to concrete cases like the XOR, the clustering of colors,  the classification of the tones played by an instrument and the diagnosis of the celiac disease.
All these cases can be essentially connected with the concept of \textit{clustering} and \textit{classification of patterns} (see \cite{Jai} for
an overview of pattern clustering methods). Moreover, we shall also discuss in this paper how the clustering procedure proposed can be improved
with the aid of the \textit{finite unit norm tight frames} (\cite{GKK01,BF03,CK03, SIS13,ole,dau,heil}) if we introduce some disturbances in the signals we are analyzing.

The paper is organized as follows: in the next section we introduce the general idea of how our clustering machine (CM) should work,
and we propose some applications; in particular in Section \ref{sec:gates} we consider the  XOR and the OR gates, in Section \ref{sec:rgb}
we apply our CM to the clustering of colors in the RGB framework,  in Section \ref{sec:tones} we deal with the recognition of tones played by some instruments, and in Section \ref{sec:celiac} we apply our methodology to the diagnosis of the celiac disease.
In Section \ref{sec:frames} we show how to extend the same settings by replacing orthonormal (o.n.) bases with frames,
and how this opens the possibility of correctly recognizing the cluster of some given noised signal.
Some explicit applications are also considered. Section \ref{sec:conclusion} contains our conclusions, and in the Appendix we introduce some properties of the frames.

\section{Stating the problem and first considerations}\label{sec:framework}

\vspace{3mm}

Suppose we have $N$ possible inputs $I_j$ which correspond to $M\leq N$ different outputs $O_\alpha$.  We would like to construct a CM which is able, performing a single simple operation, to tell us which are the possible inputs that have produced an observed result. We consider a Hilbert space $\Hil$ made of all the vectors representing all the possible inputs from which we can obtain the outputs $O_{\alpha}$.
We introduce $N$ orthogonal and normalized vectors which we write as $e_{\alpha,k}$, where $\alpha=1,2,\ldots,M$ and $k=1,2,\ldots,N_\alpha$, $N_\alpha$
being the different inputs $I_j$'s giving rise to the same output $O_\alpha$. Of course, we have  $\sum_{\alpha=1}^M N_\alpha=N$.
We assume that the inputs $I_j$, a priori, do not cover all the possibilities giving raise to $O_\alpha$ and hence the set $\E=\{e_{\alpha,k}\}$ could not be an o.n. basis for $\Hil$. What we know for sure is that $\E$ is an o.n. set. For each fixed $\alpha$, let $\Hil_\alpha$ be the finite linear span of the $e_{\alpha,k}$'s, $k=1,2,\ldots,N_\alpha$. Then $\dim(\Hil_\alpha)=N_\alpha$. Of course $\E$ is complete in $\Hil$ if and only if $\oplus_{\alpha=1}^M \Hil_\alpha=\Hil$. In general, however, we can only say that $\Hil\supseteq \oplus_{\alpha=1}^M \Hil_\alpha$. Notice that every $f\in\Hil_\alpha$ is necessarily orthogonal to each $g\in\Hil_\beta$, if $\alpha\neq\beta$: $\left<f,g\right>=0$.

Let us now introduce first the $N$ orthogonal projection operators $P_{\alpha,k}=|e_{\alpha,k}\left>\right<e_{\alpha,k}|$ satisfying $P_{\alpha,k}P_{\beta,n}=\delta_{\alpha,\beta}\delta_{k,n}P_{\alpha,k}$ and $\sum_{\alpha,k}\|P_{\alpha,k}f\|^2\leq \|f\|^2$, for all $f\in\Hil$. Of course if $\E$ is complete in $\Hil$, then the strict equality holds.
Now, out of the $P_{\alpha,k}$'s, $M$ orthogonal projection operators can be defined, one for each output $O_\alpha$: $Q_\alpha=\sum_{k=1}^{N_\alpha}P_{\alpha,k}$. They satisfy $Q_\alpha Q_\beta=\delta_{\alpha,\beta}Q_\alpha$, $\alpha, \beta=1,2,\ldots,M$, and, if $\E$ is complete in $\Hil$, then $\sum_{\alpha=1}^M Q_\alpha=\1$. $\{Q_1,\ldots,Q_M\}$ is our CM, which we are going to use as we will explain in detail.
Let $f\in\Hil$ be the signal we want to analyze, and let us construct the related set $\{q_1(f),\ldots,q_M(f)\}$, where
$$q_\alpha(f)=\|Q_\alpha f\|^2=\sum_{k=1}^{N_\alpha}\|P_{\alpha,k}f\|^2=\sum_{k=1}^{N_\alpha}|\left<e_{\alpha,k},f\right>|^2.$$
We obviously have  $\sum_{\alpha=1}^M q_\alpha(f)\leq\|f\|^2$ for all $f\in\Hil$, and the equality holds only if $\E$ is complete in $\Hil$.

To explain how the CM works concretely, we now consider what happens in several different cases. Also, we will assume here that the signal we want to analyze is normalized, $\|f\|=1$. However, it is worth noticing that this is not always possible in concrete applications, as in the RGB example we will consider later, since, in that particular case, normalizing the original signal produces a change in the colors.

\begin{enumerate}

\item We first consider the case in which $q_{\alpha_0}(f)=1$, for some $\alpha_0\in \{1,2,\ldots,M\}$. This implies that $q_{\beta}(f)=0$ for all $\beta\neq\alpha_0$. Hence the interpretation is clear: $f$ corresponds to the output $O_{\alpha_0}$, and therefore it is necessarily some linear combination of the vectors of the o.n. basis of $\Hil_{\alpha_0}$, $\{e_{{\alpha_0},k},\,k=1,2,\ldots,N_{\alpha_0}\}$.

\item Suppose now that, for some $\alpha_0\in \{1,2,\ldots,M\}$, $q_{\alpha_0}(f)\simeq1$. Then,  $q_{\beta}(f)\simeq0$ for all $\beta\neq\alpha_0$. Again the interpretation is clear, but has a certain alea: we are {\em almost sure} that $f$ corresponds to the output $O_{\alpha_0}$, even if there exists a small probability that this is not the case. Of course, the closer $q_{\alpha_0}(f)$ is to one, the smaller the probability that the output  is not $O_{\alpha_0}$.

\item Suppose now that there exist more than one index such that $q_\alpha(f)\neq0$. To simplify the situation, let us suppose that $q_1(f)$ and $q_2(f)$ are both non zero, and that $q_3(f)=\cdots=q_M(f)=0$. Then we may have two different situations:
    \begin{enumerate}
    \item the first case is when $q_1(f)+q_2(f)\simeq1$. In this case, we can essentially exclude all the outputs except the first two,
    $O_1$ and $O_2$, and which one between these two is the most probable depends on the difference between $q_1(f)$ and $q_2(f)$: for instance, if $q_1(f)>q_2(f)$, then $f$, corresponds to $O_{1}$ more probably than to $O_{2}$. For sure, it does not correspond to the other $O_\alpha$. From this point of view, a sort of {\em degree of membership}, \cite{gog}, related to the values of the $q_\alpha(t)$'s, could be introduced in our treatment.
    \item
     {the second case is when  $q_1(f)+q_2(f)$ is significantly smaller than 1.
    This clearly implies also that $\sum_{\alpha=1}^M q_\alpha(f)$ is less than one.
    But, since $\|f\|=1$, the conclusion is that a new output $O_{M+1}$ is missing in our original set, and must be taken into account. Then $\E$ cannot be complete in $\Hil$:
    there is at least one unit vector, $e_{M+1,1}$, which is orthogonal to all the other vectors in $\E$.
    Hence $\dim(\Hil)\geq N+1$. The vector $e_{M+1,1}$ can be constructed using a sort of Gram-Schmidt
    orthogonalization procedure: we put first $\tilde f:=f-Q_1(f)-Q_2(f)$, and then we define $e_{M+1,1}=\frac{\tilde f}{\|\tilde f\|}$.
    One can check that $e_{M+1,1}$ has all the required properties: it is normalized, and is orthogonal to all the
    vectors in $\E$. The related projection operator is $Q_{M+1}=P_{M+1,1}=|e_{M+1,1}\left>\right<e_{M+1,1}|$. Now, by construction, we have that
    $q_1(f)+q_2(f)+q_{M+1}(f)=1$, where $q_{M+1}(f)$ is defined in
    analogy with the others: $q_{M+1}(f)=\|Q_{M+1}f\|^2=|\left<e_{M+1,1},f\right>|^2$. It is important to stress that, as we will discuss again,
    we cannot be sure that the set $\E\cup\{e_{M+1,1}\}$ is complete in $\Hil$. In other words, it could further happen that $\dim(\Hil)>N+1$, and this is clearly the case if we find
    a new signal $g$ such that $q_{\alpha}(g)\approx 0 \quad \forall \alpha=1,\ldots,M+1$ .}
    \end{enumerate}
\item
 {Suppose finally that  $q_\alpha(f)=0$ for all $\alpha=1,2,\ldots,M$. Then we are back to a situation similar to the one just considered:
it surely exists a new vector, $e_{M+1,1}$, which is such that $q_{M+1,1}(f)=1$. Actually, since $f$ is already normalized and it is orthogonal
to all the vectors in $\E$, it is enough to put $e_{M+1,1}=f$. As before, we conclude that $\dim(\Hil)$ is, at least, equal to $N+1$. However, it might happen that for
some other signal, $g\in\Hil$, again with $\|g\|=1$, we get again $q_j(g)=0$, for all $j=1,2,\ldots,M+1$. Then we are forced to conclude
that $\E\cup\{e_{M+1,1}\}$ is not yet complete in $\Hil$, and a second vector $e_{M+2,1}=g$ must be added to this set.
Of course, this might happen several times. However, if for all the signals $f_{rel}$ which are {\em relevant} for us,
it happens that $\sum_{\alpha=1}^{M+2}q_\alpha(f_{rel})\simeq 1$, we can conclude that, at least for our purposes,
the {\em effective} dimension of $\Hil$ is exactly $N+2$.} It is not difficult now generalize further these results.
\end{enumerate}

An apparently different way to compare signals using tools coming from functional analysis is based on the following idea: confider two signals $f_1, f_2\in\Hil$. Then, the Schwarz inequality implies that $\left|\left<f_1,f_2\right>\right|\leq\|f_1\|\|f_2\|$.
Let us now define the following non negative function on $\Hil\times\Hil$:
\be
F[f_1,f_2]:=\|f_1\|\|f_2\|-\left|<f_1,f_2>\right|.
\label{funF}\en
It is clear that $F[f_1,f_2]\geq0$ for all $f_1, f_2\in\Hil$. Morevover: (a) if $f_1=f_2$, then $F[f_1,f_1]=0$; (b) if $f_1$ is orthogonal to $f_2$, then $F[f_1,f_2]=\|f_1\|\|f_2\|$. In particular $F[f_1,f_2]=1$ if they are both normalized.
Finally, (c) suppose that $f_1\simeq f_2$. This means for us that $q_\alpha(f_1-f_2)$ is sufficiently small, $\forall \alpha$
or, which is the same if $\E$ is an o.n. basis for $\Hil$, that $\|f_1-f_2\|$ is sufficiently small.
Therefore, since $0\simeq \|f_1-f_2\|^2=\|f_1\|^2+\|f_2\|^2-2\Re\left<f_1,f_2\right>$, we deduce that
$2\Re\left<f_1,f_2\right>\simeq \|f_1\|^2+\|f_2\|^2$. Now, assuming for simplicity that $\left<f_1,f_2\right>\in {\Bbb R}_+$,
which is always the case in our explicit applications, we deduce that
$$
F[f_1,f_2]\simeq\frac{1}{2}\left|(\|f_1\|-\|f_2\|)\right|^2,
$$

which is clearly expected to be small when $f_1\simeq f_2$.

Summarizing, when $f_1=f_2$ then $F[f_1,f_2]=0$. When $f_1$ and $f_2$ are {\em essentially different}, i.e. when they are orthogonal, then $F[f_1,f_2]$ is large. Finally, if $f_1$ is close to $f_2$, then $F[f_1,f_2]$ is close to zero. It is important to stress that $F[f_1,f_2]$ must be used cum grano salis: in fact, if $f_2=\alpha f_1$, for some $\alpha\in\Bbb C$, again we deduce that $F[f_1,f_2]=0$. Hence our previous statement cannot be inverted: if $F[f_1,f_2]=0$ this does not imply that $f_1=f_2$! However, this cannot happen if we, for instance, restrict to those normalized signals satisfying $\left<f_1,f_2\right>\in {\Bbb R}_+$. In this case $F[f_1,f_2]=0$ if and only if $f_1=f_2$.

\subsection{A first simple example: the XOR and the OR gates}\label{sec:gates}

We discuss first the easiest examples, i.e. the XOR gate and then the OR gate. This is useful to clarify the ideas. Later on the same ideas will be applied to more complicated examples, living in high-dimensional Hilbert spaces.

Following our procedure, since we know that there are exactly 4 inputs and 2 outputs, we associate each input $I_j$ with the vector $e_j$ of the canonical o.n. basis of $\Hil={\Bbb R}^4$. We recall that $e_j$ has three zero entries, while the $j$-th entry is equal to one. Then, $P_j=|e_j\left>\right<e_j|$ is a $4\times4$ matrix with all zero elements except the $j$-th one in the main diagonal, which is equal to one. Then $Q_1=P_1+P_4=diag(1,0,0,1)$ and $Q_2=P_2+P_3=diag(0,1,1,0)$. A generic input $f\in\Hil$ has the form
$$
f=\left(
    \begin{array}{c}
      f_1 \\
      f_2 \\
      f_3 \\
      f_4 \\
    \end{array}
  \right),
$$
with $\sum_{j=1}^4|f_j|^2=1$. Then, since
$$
Q_1f=\left(
    \begin{array}{c}
      f_1 \\
      0 \\
      0 \\
      f_4 \\
    \end{array}
  \right), \qquad Q_2f=\left(
    \begin{array}{c}
      0 \\
      f_2 \\
      f_3 \\
      0 \\
    \end{array}
  \right),
$$
we get $q_1(f)=\|Q_1f\|^2=|f_1|^2+|f_4|^2$ and $q_2(f)=\|Q_2f\|^2=|f_2|^2+|f_3|^2$. Suppose the signal $f$ coincides with one of the vectors of $\E$, for instance with $e_1$. Then, it is clear that $\{q_1(f),q_2(f)\}=\{1,0\}$. Measuring the output of the CM we are able to deduce that the input of the XOR must correspond to either $I_1$ or to $I_4$ (or a certain linear combination of these two), since the related output is exactly $O_1$. Analogously, if for some signal $g\in\Hil$ our CM produces the output $\{q_1(g),q_2(g)\}=\{0,1\}$, we can deduce that $g$ must be a linear combination of $I_2$ or $I_3$, while it is surely neither $I_1$ nor $I_4$. Suppose now that, for some non trivial signal $h\in\Hil$, we find  $\{q_1(h),q_2(h)\}=\{0,0\}$. This means that $h$ is orthogonal to $e_j$, $j=1,2,3,4$. Hence $\dim(\Hil)$ must necessarily be larger than four, and $\E$ cannot be a basis for $\Hil$. Then we can introduce a fifth vector, $e_5:=\frac{1}{\|h\|}\,h$, which is surely orthogonal to all the vectors in $\E$, and a related projection operator $Q_3=P_5:=|e_5\left>\right<e_5|$, defined as the ones above. Of course, this can not be the case for the XOR we are considering here, but it might be the case in other situations. In this case, we {\em enlarge} the set $\E$ by adding $e_5$: $\E_1=\{e_j,\,j=1,2,3,4,5\}$, and we can now hope that $\E_1$ is an o.n. basis for $\Hil$. This fact, however, is again not guaranteed in general. Let us now consider a signal $f$ which is a general, normalized unknown combination of the vectors in $\E$. In this case, a natural way to relate $f$ with some of the possible inputs, is to compare $q_1(f)$ with $q_2(f)$, and to interpret these values as probabilities, as already proposed: the closer $q_1(f)$ to one, for instance, the higher the probability that $f$ is either $I_1$ or $I_4$. If, on the other hand, $q_2(f)\simeq1$, $f$ is most likely $I_2$ or $I_3$. Finally, if  $q_1(f)$ and $q_2(f)$ are of the same order, then we are not in a position to say much about $f$.

\vspace{2mm}

The OR gate works essentially in the same way. In this case the inputs are the same as before, but the correspondence with the outputs $O_1=0$ and $O_2=1$ is different. $I_1$ is associated to $O_1$, while all the other inputs are associated to $O_2$. Hence the relevant projection operators are $\tilde Q_1=P_1=diag(1,0,0,0)$ and $\tilde Q_2=P_2+P_3+P_4=diag(0,1,1,1)$. Therefore $\tilde q_1(f)=\|\tilde Q_1f\|^2=|f_1|^2$ and $\tilde q_2(f)=\|\tilde Q_2f\|^2=|f_2|^2+|f_3|^2+|f_4|^2$. Once again, the interpretation does not change.

\vspace{2mm}

We have further considered the possibility of having some (easy) map transforming the XOR  into the OR gate. This could have interesting consequences in concrete applications, of course. More explicitly, we have asked ourselves whether an invertible operator $U$ exists such that $\tilde Q_j=UQ_jU^{-1}$, $j=1,2$. The answer is negative, as one can easily understand. In fact, this map preserves traces, while it is clear that trace$(\tilde Q_j)\neq$ trace$(Q_j)$. A direct computation also shows that, even if $U$ is not necessarily invertible, the relations $\tilde Q_j=UQ_j$ and $\tilde Q_j=UQ_jU^\dagger$ cannot be true either, for any possible choice of $U$. So we could conclude that the projectors associated to the XOR and those associated to the OR gates cannot be linked by simple operations as those proposed so far. However, we can set up a different strategy which produces, up to a bijection, the desired result. The idea is simple: we associate, via an invertible map $\Phi$, the two $Q_j$ and the two $\tilde Q_j$ operators, to four orthonormal vectors $\varphi_j$, $\left<\varphi_j,\varphi_k\right>=\delta_{j,k}$: $\Phi(Q_j)=\varphi_j$,  and $\Phi(\tilde Q_j)=\varphi_{2+j}$, $j=1,2$. Now, let us introduce an operator $U=|\varphi_1\left>\right<\varphi_3|+|\varphi_2\left>\right<\varphi_4|$. The first remark is that $U^\dagger=|\varphi_3\left>\right<\varphi_1|+|\varphi_4\left>\right<\varphi_2|$. Notice also that $U$ is not invertible. Now, it is clear that $U$ makes the job. Indeed we have $U\varphi_3=\varphi_1$, $U\varphi_4=\varphi_2$, $U\varphi_2=U\varphi_1=0$, and $U^\dagger\varphi_3=U^\dagger\varphi_4=0$, $U^\dagger\varphi_1=\varphi_3$, $U^\dagger\varphi_2=\varphi_4$. Then, for instance,
$$
Q_1=\Phi^{-1}\left(U\,\Phi(\tilde Q_1)\right), \qquad Q_2=\Phi^{-1}\left(U\,\Phi(\tilde Q_2)\right).
$$
These equations show how to go from $(\tilde Q_1,\tilde Q_2)$ to $(Q_1,Q_2)$. The inverse transformation is implemented by $U^\dagger$.

\subsection{A second example: RGB colors}\label{sec:rgb}

Color clustering technique has broad applications in many  engineering, medical and computer science situations, see, for instance \cite{Fin,Li}.
We apply here the ideas introduced so far to the simple case in which we want associate an input color $f$ to some given reference colors.

We first consider three different reference points,  $R$ (which stands for {\em red}), $G$ (for {\em green}) and $B$,
(for {\em blue}). In the standard RGB-notation, they correspond to the following three o.n. vectors of
{$\Hil_{RGB}:={\Bbb R}^3$}: $R=(1,0,0)=e_1$, $G=(0,1,0)=e_2$ and $B=(0,0,1)=e_3$, which form a basis for $\Hil_{RGB}$.
They clearly produce three different orthogonal projection operators $Q_j=P_j=|e_j\left>\right<e_j|$, $j=1,2,3$,
and each signal $f\in\Hil_{RGB}$ (i.e. any other color) produces three different numbers $q_j(f)=|\left<e_j,f\right>|^2$.
Needless to say, if for instance $q_1(f)\gg \max\{q_2(f),q_3(f)\}$, then $f$ is {\em almost red}, while, if
$q_2(f)\gg \max\{q_1(f),q_3(f)\}$, then $f$ is {\em almost green}. As an example, let $f_R=(0.95,0.1,0.1)$. Then $q_1(f_R)=0.9025$,
while $q_2(f_R)=q_3(f_R)=0.01$. In this way, choosing a suitable $\epsilon>0$, we can construct three different clusters of signals,
each centered around a different reference point and of radius $\epsilon$. For instance, a cluster $K_\epsilon(R)$ centered in
$R$ contains all the signal $f$ such that $q_1(f)\in[\|f\|^2-\epsilon,\|f\|^2]$. Notice that the inequality $q_j(f)\leq\|f\|^2$ is automatically satisfied, for all $j$, because of the
Schwarz inequality. Notice also that, if $q_1(f)\geq\|f\|^2-\epsilon$, then, since $q_1(f)+q_2(f)+q_3(f)=\|f\|^2$,  $q_2(f)+q_3(f)=\|f\|^2-q_1(f)\leq \epsilon$.
Therefore $q_2(f),q_3(f)\notin[\|f\|^2-\epsilon,\|f\|^2]$ and, according to our previous interpretation, $f$ is {\em really different } from green and blue!
It could be worth observing that, since in RGB we cannot require $f$ to be normalized (otherwise we change the color!),
we must pay attention to the fact that having, for instance, $q_1(f)=1$ does not imply that $f\in K_\epsilon(R)$.
A simple counterexample is provided by the signal $f=(1,1,1)$. Of course we have $q_j(f)=1$ for all $j$. However, if we take $\epsilon$
reasonably small, $q_j(f)\geq\|f\|^2-\epsilon\geq3-\epsilon$ is false. Then $f$ does not belong to any (reasonably small) cluster centered in $R$, $G$ or $B$.

It could be also useful to consider our procedure from a different perspective. In fact, it is easy to understand
that if a signal $f$ is close to be  \textit{red}, then $\|f-R\|^2=q_1(f-R)+q_2(f-R)+q_3(f-R)\approx0$.
If $f_R$ is the signal previously defined,
we easily obtain $\|f_R-R\|^2=0.025$, while $\|f_R-G\|^2=\|f_R-B\|^2=1.63$. This allows to extend our CM to other reference points
which do not belong to $\E=\{e_j, j=1,2,3\}$. For instance, let us consider the following new reference points $P_1=(0.6,0,0.6)$ (which is purple) and $P_2=(0,0.8,0.2)$ (some sort of green).
Let us now consider the following inputs, which we need to classify, with respect to $P_1$ and $P_2$: $f_1=(0.8,0.1,1)$, $f_2=(0.3,0.6,0.1)$ and $f_3=(0.7,0.8,1)$.  We easily find that
$$
\|P_1-f_j\|^2\simeq\left\{
\begin{array}{ll}
0.21,\quad j=1,  \\
0.49,\quad j=2,  \\
0.81,\quad j=3,
\end{array}%
\right.
\qquad
\|P_2-f_j\|^2\simeq\left\{
\begin{array}{ll}
1.76,\quad j=1,  \\
0.13,\quad j=2,  \\
1.12,\quad j=3.
\end{array}%
\right.
$$
These results suggest that $f_1$ is closer to $P_1$ than the other inputs, and that $f_2$ is not very different from $P_2$, while $f_3$ is really another color, neither purple nor green.
 This is in fact what one observes, since $f_1$ is a different purple, $f_2$ is a dark green, while $f_3$ is a pale blue.To these same conclusions we arrive considering, as in formula (\ref{funF}), the function $F[f,g]$. We find
$$
F[P_1,f_j]\simeq\left\{
\begin{array}{ll}
0.01,\quad j=1,  \\
0.34,\quad j=2,  \\
0.22,\quad j=3,
\end{array}%
\right.
\qquad
F[P_2,f_j]\simeq\left\{
\begin{array}{ll}
0.78,\quad j=1,  \\
0.06,\quad j=2,  \\
0.36,\quad j=3.
\end{array}%
\right.
$$
These results suggest the same conclusions as above, but differences are made much more evident! Looking at these results we can safely say that $f_1$ belongs to a suitable cluster of $P_1$, while $f_2$ and $f_3$ do not (if $\epsilon$ is not large enough). Also, $f_2$ belongs to a suitable cluster of $P_2$, while $f_1$ and $f_3$ do not.

\subsection{Recognition of tones}\label{sec:tones}

Let us now consider the practical case in which our input signal is a sound sample which reproduces a single \textit{defined} tone
and we want to recognize this tone. The input signal is  a sound produced by any instrument (or by the human voice).
As we shall better explain later,  in some cases it is necessary to know which is the instrument used to play the sample.\\
Here we consider  input signals $\tilde f$ that are in {\em waveform audio file} format (i.e. WAV files)
with the common sampling frequency of  $44.1$ kHz. This roughly means that the
audio is recorded by sampling it $44.100$ times per second.
Therefore $\tilde f$ is a $N$ dimensional vector of $\Bbb R^{N}$, where $N$ depends on the time length $t_s$ of the sound in such a way that
$t_s=N/44100$. For simplicity we  consider here only sounds that are 1 second long, so that $N=44100$. Then
$\tilde f=(\tilde f_1,\tilde f_2,...\tilde f_{44100})\in \Bbb R^{44100}$.
However we do not directly work on the signal $\tilde f$. In fact, to deal with audio signals in { audio-processing engineering},
 one generally needs to evaluate
the Time Discrete Fourier Transform $\F_{D}[\tilde f](n)=f_n=\sum _{k=0}^{N-1}\tilde f_k \text{e}^{-ik2\pi n/N} $ of the original signal $\tilde f$,
and consider the resulting vector made by all modula of the Fourier modes $|\F_{D}[f](n)|$
(sometimes, in audio-processing engineering, also the square modulus is used, see \cite{Mar}).

Hence
our \textit{final} input signal is  $f=\sum_{n=1}^{22050}|f_n|e_{n}$, where $e_n$ is the $n$-th element of the canonical o.n basis
$\E$ of $\Hil_{sound}=\Bbb R^{22050}$
(as it is well known the algorithms computing the Fourier transform
of a real input signal of length $N$, returns a complex vector of $N/2$ elements, and this is why our space has dimension 22050).
We always consider our input signal $f$ normalized\footnote{We notice that, differently from the RGB case
in which the normalization
modifies the original color,
here the normalization of the signal does not affect the tone, but it only modifies the original loudness of the sound.} so that $\|f\|=1$. Here
 $\|.\|$ is the norm in $\Hil_{sound}$.\\
Each component of $f$ is related to a specific $frequency$ in the frequency domain. It is well known that if
the signal $f$  represents a specific tone $t$,
 then it has a \textit{fundamental frequency} $k_{ t}$ significantly excited ($|f_{k_t}|\gg 0$) and in general, depending on the kind of instrument
 which produces this tone, some of the frequencies
 multiplies of $k_{ t}$ are excited as well ($|f_{j\cdot k_t}|\gg 0$ for some $j=2,3,4,\ldots$).
 These latter frequencies are the so called \textit{harmonics}, which sometimes have even bigger amplitudes than the fundamental frequency.
 On the other hand, the neighboring frequencies of the fundamental and of the harmonics all decay rapidly to zero. A typical input signal $f_{A2}$  is shown in Fig.\ref{la},
 where $f_{A2}$ represent the tone A2 of an electric guitar playing the second open string (we consider here the standard guitar tuning EADGBE), and the fundamental
 frequency here is $110$Hz. The tone A2 means that we are playing the tone A in the second octave: each octave contains all the 12 semitones,
 i.e. C,C$^{\sharp}$,D,D$^{\sharp}$,E,F,F$^{\sharp}$,G,G$^{\sharp}$,A,A$^{\sharp}$,B. The difference between a tone on two different octaves is that the tone belonging
 to the higher octave plays sharper and its fundamental frequency is higher (for instance the tone A3 has the fundamental frequency 220Hz).
 We stress that the number of harmonics excited strictly depends on the instrument. For example we have noticed that a bass guitar
 can have  two or three harmonics significantly excited, while some digital piano can have only one significant harmonic.\\

\begin{figure}
\begin{center}\includegraphics[width=12cm]{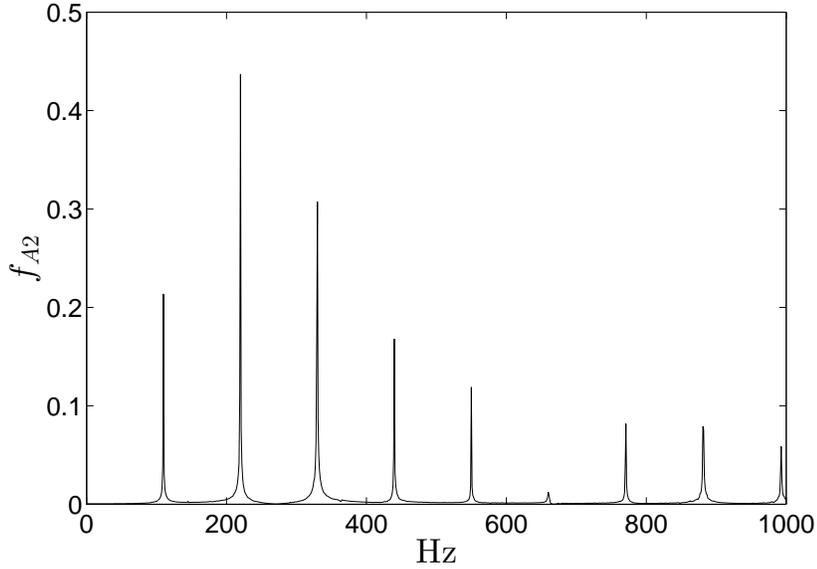}
\caption{The input signal representing the tone A2. The instrument is an electric guitar.
The fundamental frequency is 110Hz, and several harmonics (whose frequencies are  multiplies of the fundamental one) are significantly
excited.}
\label{la}
\end{center}
\end{figure}

As we did before for RGB, we need here to build a set $\mathcal{O}$
of reference inputs (tones)
to compare them with the unknown signal $f$.
A generic element $O^t$ in $\mathcal{O}$ can be  a vector in which the fundamental frequency  $k_{ t}$
and a finite number $n_h$ of its harmonics are excited, while
the other frequencies can be set to zero. In this way  $O^t$ is essentially a real input vector, and it represents a specific tone $t$ depending
on the value of $k_t$: for instance, the vector $O^{A2}$ representing the tone A2, has the components $O^{A2}_{110},O^{A2}_{220},...,O^{A2}_{n_h 110}$
significantly excited (and the other zero), the vector $O^{D3}$ has the components $O^{D3}_{147},O^{D3}_{294},...,O^{D3}_{n_h 147}$ significantly excited (and, again, the other zero), and so on.
For simplicity we assume that all the components excited of the various vectors $O^t$ are equal amongst themselves,  and that $O^t$ is normalized.
Hence, the set of all  reference tones is $\mathcal{O}_{n_h}=\{O^t: O^t_{j\cdot k_{t}}=\frac{1}{\sqrt{n_h+1}}\quad \text{and }
O^t_l=0, l\neq jk_{t}, j=1,...,n_h+1, \text{with } k_t\in\mathcal{F}_\mathcal{O} \}$. Here $\mathcal{F}_\mathcal{O}$ is the set of all the fundamental
frequencies $k_{t}$ that are determined
according the well known rule $k_{t}=[440\text{Hz}\cdot2^{(s/12)}]$, where $[\cdot]$ is the integer part function, 440Hz is the fundamental frequency of the tone A4, and $s$ is the number of
semitones between $t$ and A4.\\From now we shall focus only in signals representing tones in the octaves 2,3,4 and 5 (48 tones in total);
therefore $F_{\mathcal{O}}=\{65,69,73,...,932,988\}$ and a generic vector $O^t\in \mathcal{O}_{n_h}$
can be written as the following linear combination of the vectors $e_j$'s: $O^t=\frac{1}{\sqrt{n_h+1}}(e_{k_t}+e_{2k_t}+...e_{(n_h+1)k_t})$, $k_t\in F_{\mathcal{O}}$.
\\Once we have build the set of reference tones, we can apply our strategy to recognize the input signal $f$. Following the RGB example where the reference points were
colors different from R,G and B, we have computed all the values $\sum_{j=1}^{22050}q_{j}(f-O^t)=\|f-O^t\|^2$  and, since $\|f\|=\|O^t \|=1$, $F[f,O^t]=1-\left<f,O^t\right>$,
with $O^t\in \mathcal{O}_{n_h}$. Clearly, if $f$ plays a tone $\tilde t$ then we should have  $\|f-O^{\tilde t}\|^2<\|f-O^{t}\|^2$ and
$F[f,O^{\tilde t}]<F[f,O^t]$ for $\tilde t\neq t$.
However,  our ability to recognize the tone and the correct octave depends on the number $n_h$ of harmonics excited.
{  By taking $n_h=0$  we obviously obtain $\mathcal{O}_{0}=\{e_j,\,k_j\in \mathcal{F_\mathcal{O}}\}$,
and hence the references tones are the element of the basis of $H_{sound}$.}
But fixing $n_h=0$ we have to face with some drawbacks. In fact,
suppose to have a very simple input sound $f$ whose components follow the rule $f_k=(e^{-((k-110)/0.1)^2}+2e^{-((k-220)/0.1)^2}+e^{-((k-440)/0.1)^2})/\sqrt{6}$.
This signal represents the tone A2, as it has the fundamental frequency $k_f=110$ and two harmonics excited.
A straightforward computation shows that $F[f,e_{220}]<F[f,e_{j}]$ and $\|f-e_{220}\|^2<\|f-e_{j}\|^2$ for all $j\neq 220$; this means that the signal $f$ is wrongly recognized as a
tone A3. This issue
arises if we apply our strategy also to a real sound.
In Tables \ref{latabellan}-\ref{latabellaps} the results related to an input signal $f_{A2}$ representing the tone A2 (see Fig.\ref{la})
of an electric guitar are shown.
\beano
\begin{minipage}[t]{0.5\textwidth}
\begin{tabular}{l|c|c|c|c}
\toprule
$O^t$ &  2nd &  3rd &  4th &  5th\\
\midrule
C&1.999 & 1.993 & 1.997 & 1.998 \\
 C$\sharp$&1.998 & 1.992 & 1.995 & 1.970 \\
 D&1.998 & 1.992 & 1.998 & 1.994 \\
 D$\sharp$&1.997 & 1.994 & 1.987 & 2.000 \\
 E&1.998 & 1.995 & 1.127 & 1.984 \\
 F&1.993 & 1.989 & 1.985 & 1.995 \\
 F$\sharp$&1.996 & 1.988 & 1.994 & 1.996 \\
 G&1.984 & 1.981 & 1.999 & 1.999 \\
 G$\sharp$&1.977 & 1.962 & 1.994 & 1.995 \\
 A&1.491 & 0.8482 & 1.524 & 1.922 \\
 A$\sharp$&1.976 & 1.978 & 1.991 & 1.999 \\
 B&1.989 & 1.993 & 1.998 & 1.975 \\
 \bottomrule
\end{tabular}
\captionof{table}{$\|f_{A2}-O^t \|^2$ , $n_h=0$.}
\label{latabellan}
\end{minipage}
\begin{minipage}[t]{0.45\textwidth}
\hspace*{0.5cm}\begin{tabular}{l|c|c|c|c}
\toprule
  $O^t$&  2nd &  3rd &  4th &  5th\\
\midrule
C & 0.9997&0.9967&0.9987&0.9991\\
 C$\sharp$&0.9988&0.9961&0.9975&0.9849\\
 D&0.9992&0.9960&0.9990&0.9970\\
 D$\sharp$&0.9984&0.9969&0.9934&0.9998\\
E&0.9992&0.9974&0.5630&0.9919\\
 F&0.9967&0.9947&0.9924&0.9978\\
 F$\sharp$&0.9980&0.9938&0.9970&0.9980\\
 G&0.9919&0.9904&0.9995&0.9994\\
 G$\sharp$&0.9887&0.9811&0.9971&0.9974\\
 A&0.7460&0.4240&0.7620&0.9608\\
 A$\sharp$&0.9881&0.9892&0.9955&0.9995\\
 B&0.9946&0.9964&0.9989&0.9873\\
 \bottomrule
\end{tabular}
\captionof{table}{$F[f_{A2},O^{t}]$ , $n_h=0$.}
\label{latabellaps}
\end{minipage}
\enano

It is evident how, once again, the procedure wrongly recognizes the signal as a tone A3 both with the square norm value and with the function $F$,
and this is somewhat obvious as the first harmonic of $f$ (220Hz) has amplitude
greater than the amplitude of the fundamental (110Hz). Actually, with $n_h=0$, we have built a set or reference tones which are not really \textit{similar}
to an input signal given from the electric guitar.

However,
it is worth noting that the case $n_h=0$ well recognizes  the octave for input signals having
only the fundamental frequency significantly excited (for instance sounds recorded from some digital piano).
Generally, to avoid the mis-recognition of the octave we should construct the set $\mathcal{O}_{n_h}$  so that the reference tones $O^t$
are closer to the kind of signal we are analyzing. As previously said the signals relative to a guitar/bass guitar have 2 or more harmonics excited.
Therefore, it is more appropriate to take $n_h=2$. Hence the vectors in $\mathcal{O}_{2}$ are no more vectors of the basis $\E$ of $\Bbb R^{22050}$.
The results for $n_h=2$ are shown in Tables \ref{la23tabellan}-\ref{la23tabellaps}: in this case we can well recognize
both the tone and the octave. We have also checked (these results not shown here) if the recognition works for other input signals from an electric guitar/bass guitar,
and we were always able to well recognize the correct tone when $n_h=2$.

We have also applied our procedure to other instruments (violin), and in that case we were able to  recognize all
the input signals by taking $n_h=4$ (this is due to the fact that a violin has more harmonics significantly excited than an electric guitar).
\beano
\begin{minipage}[t]{0.45\textwidth}
\begin{tabular}{l|c|c|c|c}
\toprule
 $O^t$&  2nd &  3rd &  4th &  5th\\
\midrule
 C&1.984 & 1.992 & 1.992 & 1.991 \\
 C$\sharp$&1.984 & 1.985 & 1.993 & 1.953 \\
 D&1.988 & 1.883 & 1.930 & 1.957 \\
 D$\sharp$&1.980 & 1.980 & 1.989 & 1.957 \\
 E&1.991 & 1.699 & 1.577 & 1.891 \\
 F&1.984 & 1.990 & 1.991 & 1.984 \\
 F$\sharp$&1.986 & 1.987 & 1.982 & 1.989 \\
 G&1.985 & 1.993 & 1.988 & 1.987 \\
 G$\sharp$&1.971 & 1.992 & 1.993 & 1.976 \\
 A&1.152 & 1.479 & 1.836 & 1.938 \\
 A$\sharp$&1.970 & 1.992 & 1.994 & 1.960 \\
 B&1.985 & 1.988 & 1.848 & 1.817 \\
 \bottomrule
\end{tabular}
\captionof{table}{$\|f_{A2}-O^t \|^2$ , $n_h=2$}
\label{la23tabellan}
\end{minipage}
\begin{minipage}[t]{0.5\textwidth}
\begin{tabular}{l|c|c|c|c}
\toprule
 $O^t$&  2nd &  3rd &  4th &  5th\\
\midrule
C &0.9933&0.9972&0.9990	&0.9977\\
 C$\sharp$&	0.9935	&0.9961	&0.9974&	0.9997\\
 D&	0.9974&	0.9433	&0.9670&	0.9797\\
 D$\sharp$&	0.9970	&0.9915&	0.9955&	0.9843\\
E &	0.9981	&0.8540&	0.8870&	0.9486\\
 F&	0.9949&	0.9984&	0.9967&	0.9930\\
 F$\sharp$&0.9991&0.9957&0.9952 & 0.9988\\
 G&	0.9959&0.9982&	0.9959&	0.9969\\
 G$\sharp$&	0.9949&	0.9975&	0.9966&	0.9880\\
 A&	0.7140&	0.8480	&0.9517&	0.9797\\
 A$\sharp$&	0.9873&	0.9971&	0.9979&	0.9805\\
 B&	0.9950&	0.9980	&0.9246&	0.9186\\
 \bottomrule
\end{tabular}
\captionof{table}{$F[f_{A2},O^{t}]$ , $n_h=2$.}
\label{la23tabellaps}
\end{minipage}
\enano

\subsection{Celiac disease diagnosis}\label{sec:celiac}

We now consider an application of a completely different kind, i.e. the case in which our inputs are vectors containing numerical values representing symptoms, signs and laboratory findings useful to diagnose the celiac disease. Our strategy is based on the idea that we can use a  large set of patients with known diagnosis (celiac or not celiac), to determine if a new patient is celiac or not.
To borrow a common word used in clustering procedures, we use a $training$ dataset $\mathcal S_{train}$ made of 300 input data $I_j$, $j=1,2,\ldots,300$, for which the corresponding outputs (the diagnosis) are $O_0$ (no celiac) or $O_1$ (celiac). Each $I_j$ is charachterized by 16 symptoms  (\textit{Abdominal distention,Cephalea / Migraine,	 Chronic diarrhoea,	Dyspepsia,	 Epigastric heartburn,	Face' swelling,	Fatigue / Astenia,	Growth failure,	 Hair loss,	Nausea,	 Recurrent abdominal pain,	Recurrent miscarriage,	Regurgitation,	Steatorrhoea	 Vomiting,	 Weight loss}), 11 signs (\textit{Abdominal meteorism,	 Abdominal pain,	Alopecia,	 Amenorrhea,	 Bloating,	 Dermatitis herpetiformis,	Mouth ulcer,	Nail dystrophy,	Paleness, 	 Shortness,	 Thinness}) and 8 laboratory findings (\textit{Anemia,	Ab anti Tg positivity,	Ab anti TPO positivity,	 High TSH levels,	Hypertransaminasemia,	Hypocalcemia,	Hypoferritinemia,	Low serum iron}). Each value is an integer number ranging between 0 and 10 (intermediate value are possible), where 0 (resp. 10) can be interpreted as absence (resp. maximal presence) of symptoms/signs or  very low (resp. very high) values of laboratory findings.  It is obvious that quantifying symptoms or signs is not an easy task, and values should be validated by an expert physician (values contained in our dataset are obtained after accurate	medical examinations from physicians of the University of Palermo).  Following what done in the previous section, our Hilbert space is $\Hil=\Bbb R^{35}$, and each patient is seen as a vector $f=(f_1,...,f_{35})\in \Bbb R^{35}$ to which an output $O_j$, $j=0,1$ is associated; the o.n. basis is the usual one, $e_j$, $j=1,2,\ldots,35$. For simplicity we write $\mathcal S_{train}=\mathcal S_0 \bigcup \mathcal S_1$, with $S_0\bigcap S_1=\emptyset$, being $S_0$ the set of inputs having $O_0$ as output, and $S_1$ those having $O_1$.

To check if a new patient $f_{new}$ is celiac or not, we compute $d_0=\min_{f_0\in{S_0}} F[f_{new},f_0]$ and  $d_1=\min_{f_1\in{S_1}} F[f_{new},f_1]$, being $F$ defined in \eqref{funF}. If $d_0>d_1$ we mark the patient as not celiac, while he is marked celiac whether $d_0<d_1$; this is somehow obvious to understand, because if $d_0>d_1$ (resp. $d_0<d_1$) means that the  patient $f_{new}$ has similarities with a patient that is already marked as not celiac (resp. celiac). The case $d_0=d_1$, which however is quite  unlikely and was never observed in our computations,  represents a doubtful situation: this means that we have found two vectors $f_0 \in \mathcal S_0,f_1 \in \mathcal S_0$ so that $F[f_{new},f_0]=F[f_{new},f_1]$. In this case we could remove  $f_0,f_1$ from $\mathcal S_0$ and $\mathcal S_1$ and compute the new values of $d_0$ and $d_1$ to deduce our diagnosis; notice however that it is still possible, thought quite unluckily, to get again $d_0=d_1$. In this case, we have to repeat once more this procedure.

To test this idea we have considered a set $S_{new}$ containing 30 new patients, for which  the diagnosis is known (otherwise it is impossible to check if the procedure works).
By applying our procedure we have obtained that 26 over 30 diagnoses were correct. In particular the 4 wrong diagnosis contain 1 patient erroneously marked as not celiac , and 3 patients erroneously marked as celiac. It is worth noting that the same results, with the same wrong diagnoses, are obtained if we determine $d_0$ and $d_1$ as $d_0=\min_{f_0 \in \mathcal S_0}\|f_{new}-f_0\|^2$ and $d_1=\min_{f_1 \in \mathcal S_1}\|f_{new}-f_1\|^2$.  To compare these results with other classical method we have build a  decision tree based on the C4.5 algorithm, \cite{Qui93}, which uses the concept of information entropy. The training dataset used is $\mathcal S_{train}$ and calculation are performed through $Weka$ software, \cite{Weka}. The  application of the decision tree
has determined 7 wrong diagnoses: what is relevant is that 4 of these erroneous diagnoses are the same of our procedure, but, worst than ours, the decision tree makes three more mistakes. Even if the test set is at the moment not too large, the results of our analysis already show that the efficiency of our method is at least comparable with other classical techniques.

We are willing to apply our strategy also to the diagnosis of the Kawasaki syndrome, but we are still completing the creation of a dataset sufficiently rich to train the system. Of course, still a different possibility would consists in comparing our results with those given by some artificial neural network (ANN). Indeed, for the celiac disease, this has been done, and our results suggest again that our CM works better than the ANN.

\section{The role of the frames}\label{sec:frames}

In this section we replace o.n. bases of $\Hil$ with frames. Frame theory has been successfully used in many pure and applied mathematical contexts.
This includes time-frequency analysis \cite{Gro}, image processing/reconstruction \cite{Chan}, quantum measurements
\cite{Eld}, sampling theory \cite{Eld2}, data recovering \cite{Cai} and bioimaging \cite{Kov}, to name a few.
As we will explicitly show, frames are useful in our context  since they allow our CM to work with noised signals and,
in particular, they allows to recognize which is the relevant cluster for a signal $f\in\Hil$ to which some noise $\nu$ has been added. This could be important when,
for instance, the signal is transmitted from a source to a receiver, or when some background noise is present,
or in many other concrete situations.  We stress that our transmission is subjected only to some noising effect, and we shall not deal in this case with data loss or corruption. However the latter cases are also typical settings in which frames are used.  For instance in \cite{GKK01}  a quantized frame expansion has been used to guarantee robustness to the transmission of packet network, while in \cite{CK03} the authors give a complete classification of frames with respect
to their robustness to erasures in the same setting (we also mention \cite{HP04} where the authors introduced a measure for optimality of frames under erasures).

We now describe our mathematical setting based on frames. Let $\F_\Psi=\{\Psi_j, \,j=1,\ldots,N\}$ be an $(A,B)$-frame for the finite dimensional Hilbert space $\Hil$, see \cite{ole,dau} and the Appendix, for few useful results. Then, calling $\F_{\tilde\Psi}=\{\tilde\Psi_j, \,j=1,\ldots,N\}$ its (canonical) dual frame, any vector $f$ in $\Hil$ can be written as
$$
f=\sum_{j=1}^N\left<\Psi_j,f\right>\tilde\Psi_j=\sum_{j=1}^N\left<\tilde\Psi_j,f\right>\Psi_j.
$$
Let $f,g\in\Hil$. We introduce the $dissimilarity$ $measures$ $\Delta$ and $\nabla$:
\be
\Delta(f,g)=\max\left\{\sup_j\left|\left<\Psi_j,f-g\right>\right|,\sup_j\left|\left<\tilde\Psi_j,f-g\right>\right|\right\},
\label{31}\en
and
\be
\nabla(f,g)=\min\left\{\sup_j\left|\left<\Psi_j,f-g\right>\right|,\sup_j\left|\left<\tilde\Psi_j,f-g\right>\right|\right\}.
\label{32}\en
It is clear that both $\Delta(f,g)$ and $\nabla(f,g)$ are non negative and that $\Delta(f,g)\geq\nabla(f,g)$ for all $f,g\in\Hil$. Moreover, if $f=g$, then $\Delta(f,g)=\nabla(f,g)=0$. It is possible to check that both $\Delta(f,g)$ and $\nabla(f,g)$ reduce essentially to the norm distance when $\F_\Psi$ is an o.n. basis. In fact, if this is the case, it is well known that $\tilde\Psi_j=\Psi_j$ for all $j$. Then $\Delta(f,g)=\nabla(f,g)=\sup_j\left|\left<\Psi_j,f-g\right>\right|$, and the following proposition holds:
\begin{prop}\label{prop1}
Let  $\F_\Psi$ be an o.n. basis. Then, taking $f,g\in\Hil$ with $\|f-g\|\leq\epsilon$, we have $\Delta(f,g)\leq\epsilon$. Viceversa, if $\Delta(f,g)\leq\epsilon$, then $\|f-g\|\leq\epsilon\,\sqrt{\dim(\Hil)}$
\end{prop}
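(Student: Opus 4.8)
The plan is to exploit the observation made immediately before the statement: for an orthonormal basis the canonical dual coincides with the basis itself, $\tilde\Psi_j=\Psi_j$, so that $\Delta(f,g)=\nabla(f,g)=\sup_j\left|\langle\Psi_j,f-g\rangle\right|$. Hence the whole proposition reduces to comparing the quantity $\sup_j\left|\langle\Psi_j,h\rangle\right|$ with $\|h\|$, for $h=f-g$, and this is done by two elementary estimates in opposite directions.

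For the first implication I would simply invoke the Schwarz inequality together with $\|\Psi_j\|=1$: for every index $j$,
$$
\left|\langle\Psi_j,f-g\rangle\right|\le\|\Psi_j\|\,\|f-g\|=\|f-g\|\le\epsilon ,
$$
and taking the supremum over $j$ yields $\Delta(f,g)\le\epsilon$. Note that this half uses only that $\F_\Psi$ is an orthonormal \emph{system}; completeness plays no role here.

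For the converse I would use Parseval's identity, which is available precisely because $\F_\Psi$ is an orthonormal \emph{basis} of the finite dimensional space $\Hil$. Writing $d=\dim(\Hil)$,
$$
\|f-g\|^2=\sum_{j=1}^{d}\left|\langle\Psi_j,f-g\rangle\right|^2\le d\,\Bigl(\sup_j\left|\langle\Psi_j,f-g\rangle\right|\Bigr)^2=d\,\Delta(f,g)^2\le d\,\epsilon^2 ,
$$
where the middle inequality just bounds each of the $d$ nonnegative summands by the largest one. Taking square roots gives $\|f-g\|\le\epsilon\,\sqrt{\dim(\Hil)}$, as claimed.

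There is no genuine obstacle in this argument; the only point worth stressing is that the asymmetry between the two bounds is real and cannot be avoided. Passing from the $\sup$-type quantity $\Delta$ back to the Euclidean norm necessarily costs a factor $\sqrt{\dim(\Hil)}$, as one sees by taking an $h$ whose Fourier coefficients with respect to $\F_\Psi$ are all equal; this is exactly why, in the frame setting of the following sections, $\Delta$ and $\nabla$ are more delicate objects than the plain norm distance.
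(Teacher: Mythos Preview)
Your proof is correct and follows essentially the same approach as the paper: the Schwarz inequality with $\|\Psi_j\|=1$ for the forward implication, and Parseval's identity together with the trivial bound of each summand by the supremum for the converse. Your additional remarks on which half actually needs completeness and on the sharpness of the $\sqrt{\dim(\Hil)}$ factor are accurate and go slightly beyond what the paper states, but the core argument is the same.
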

\begin{proof}
The first result follows from the Schwarz inequality: $\left|\left<\Psi_j,f-g\right>\right|\leq \|\Psi_j\|\|f-g\|=\|f-g\|$. Then $\Delta(f,g)=\sup_j\left|\left<\Psi_j,f-g\right>\right|\leq \sup_j\|f-g\|=\|f-g\|\leq\epsilon$.

To prove the second statement, we use the Perseval equality as follows
$$
\|f-g\|^2=\sum_{j=1}^N\left|\left<\Psi_j,f-g\right>\right|^2\leq \epsilon^2 \sum_{j=1}^N 1=\epsilon^2\,\dim(\Hil).
$$
Here we have used the fact that, since by assumption $\Delta(f,g)\leq\epsilon$,  $\sup_j\left|\left<\Psi_j,f-g\right>\right|\leq\epsilon$, so that  $\left|\left<\Psi_j,f-g\right>\right|\leq\epsilon$ for all $j$.

\end{proof}

Of course, since in this proposition $\F_\Psi$ is assumed to be an o.n. basis for $\Hil$, $\dim(\Hil)=N$. Finally, this result also proves that both $\Delta(f,g)$ and $\nabla(f,g)$ can be seen as a sort of {\em extended distances}, not very different from the distance defined by the norm in $\Hil$. This conclusion is also true when $\F_\Psi$ is a frame (and not necessarily an o.n. basis). In fact, in this case,  we have (i) $\Delta(f,g)=\Delta(g,f)$; (ii) $\Delta(f,g)>0$, if $f\neq g$, and $\Delta(f,f)=0$; (iii) $\Delta(f,g)\leq \Delta(f,h)+\Delta(h,g)$, for all $f,g,h\in\Hil$. As for $\nabla(f,g)$, the analogous of (i) and (ii) are also satisfied, but the triangular inequality is not, in general. Hence $\nabla$ is not a distance, but still can be used, to some extent, to measure the difference between two vectors $f$ and $g$. In this case, if $\Delta(f,g)\leq\epsilon$ and if $\F_\Psi$ is a frame, not necessarily an o.n. basis, we get \be\|f-g\|\leq\epsilon\min\left(\sum_{j=1}^N\|\Psi_j\|,\sum_{j=1}^N\|\tilde\Psi_j\|\right).\label{33}\en
Again, if $\Delta(f,g)\leq\epsilon$, and if both $\sum_{j=1}^N\|\Psi_j\|$ and $\sum_{j=1}^N\|\tilde\Psi_j\|$ are finite and independent of $\epsilon$, $\|f-g\|$ goes to zero with $\epsilon$.

\vspace{2mm}

{\bf Remark:--} if we again assume for a moment $\F_\Psi$ to be an o.n. basis for $\Hil$, then $\sum_{j=1}^N\|\Psi_j\|=\sum_{j=1}^N\|\tilde\Psi_j\|=N$. Then inequality (\ref{33}) is weaker than the one deduced in Proposition \ref{prop1}, since, whenever $\dim(\Hil)>1$, $\sqrt{\dim(\Hil)}<\dim(\Hil)$.
\vspace{3mm}

We are now ready to use this framework for our original clustering problem. For that, let now $P\in\Hil$ be our {\em reference point}, $f\in\Hil$ and $f_{noised}=f+\nu$ the signal we want to attach to $P$, with the noise added. We start defining three, in principle, different neighborings of $P$:
\be
\begin{array}{ll} K_\epsilon^{\|.\|}(P):=\{f\in\Hil\,: \|P-f\|\leq\epsilon\},\\
K_\epsilon^{\Delta}(P):=\{f\in\Hil\,: \Delta(f,P)\leq\epsilon\},\\
K_\epsilon^{\nabla}(P):=\{f\in\Hil\,: \nabla(f,P)\leq\epsilon\}.\\
\end{array}
\label{34}\en
Of course, from what we have seen, there exist connections between these sets. For instance, since $\Delta(f,P)\geq\nabla(f,P)$ for all $f\in\Hil$, we deduce that $K_\epsilon^{\Delta}(P)\subseteq K_\epsilon^{\nabla}(P)$. Also, calling $M=\max\{\sum_{j=1}^N\|\Psi_j\|,\sum_{j=1}^N\|\tilde\Psi_j\|\}$, inequality (\ref{33}) shows that, if $f\in K_\epsilon^{\Delta}(P)$, then $f\in K_{M\epsilon}^{\|.\|}(P)$. Other properties of this kind could also be deduced.

We will now see why frames could be more useful than o.n. bases in a clustering procedure involving noised signal.

\subsection{An example in $\Hil={\Bbb R}^3$}\label{sec:exahil3}

We begin with a very simple example, living in a three-dimensional space. Let $P=(1,2,3)$ be a point (or a vector) in $\Hil$. Our aim is to construct, for this reference point, the clusters in (\ref{34}) and we want to show, in particular, that using $K_\epsilon^{\|.\|}(P)$ could be not a proper choice to recognize signals affected by noise.

In fact, it is very easy to construct an example: let us consider the following {\em clean} signal, $f=(1.1,2,3)$, and a small noise $\nu=(0,0.1,0)$. Hence $f_{noised}=(1.1,2.1,3)$. Now,  $f-P=(0.1,0,0)$ while $f_{noised}-P=(0.1,0.1,0)$. Let us now further fix $\epsilon=0.1$ as the size of the clusters. Then the original signal, $f$, belongs to $K_\epsilon^{\|.\|}(P)$, while the noised signal does not. In fact we have $\|f-P\|=0.1\leq\epsilon$, while $\|f_{noised}-P\|=\sqrt{0.02}\geq\epsilon$.

This means that, when we send $f$ to a receiver $\cal R$, since along the way the original signal is noised and $f_{noised}$ is what it is received by $\cal R$, the receiver could determine that $f$  does not belong to $K_\epsilon^{\|.\|}(P)$. This is a wrong conclusion, since $f\in K_\epsilon^{\|.\|}(P)$. Let us now show that this can be avoided using frames rather than o.n. bases\footnote{Of course, using o.n. bases is equivalent to use norms, because of the Parseval equality. Hence $K_\epsilon^{\|.\|}(P)$ could also be defined in terms of some o.n. basis of $\Hil$.}.

\vspace{2mm}

Let us now consider the tight dual frame $\F_\Psi$ introduced in the Appendix, and its dual frame. A straightforward computation shows first that $\Delta(f,P)=\max\{0.1,0.08\}=0.1\leq\epsilon$ while $\nabla(f,P)=\min\{0.1,0.08\}=0.08\leq\epsilon$. Hence $f\in K_\epsilon^{\nabla}(P)\cap K_\epsilon^{\Delta}(P)$. So $f$ is {\em close} to $P$ in any of the possible ways we have considered here. Now, let us see what happens for $f_{noised}$. It is again very simple to check that $\Delta(f_{noised},P)=\max\{0.1,0.08\}=0.1\leq\epsilon$ and that $\nabla(f_{noised},P)=\min\{0.1,0.08\}=0.08\leq\epsilon$. Therefore, also the noised signal belong to the same cluster as $f$, whichever choice we make. Then, in this case, both $\Delta$ and $\nabla$ work fine for our purposes.

\vspace{2mm}

{\bf Remark:--} Of course, it is not difficult to adapt this example to RGB, since the Hilbert space is exactly the same, so that we can use the same frame.
However, we will not do it here, since this explicit application is now absolutely straightforward. Rather than that, we prefer to focus now on recognition of noised tones, for which the Hilbert space is significantly bigger.

\subsection{Recognition of tones, part 2}\label{sec:rectones2}
In this section we shall see how frames can be useful in the recognition of a tone in which some noise is added. Let us consider for instance the situation shown
in Fig.\ref{la2noise}. A noised signal $g_{A2n}$ is generated by adding some noise $\nu$ to the first 1000 components of an input signal $g_{A2}$
representing some kind of digital sound
reproducing the tone $A2$. Therefore, $g_{A2n}=g_{A2}+\nu$, where $\nu$ is a vector of $\Bbb R^{22050}$ whose first 1000 components are random number ranging from 0 to 0.1, while the others are
set to 0.
\begin{figure}
\begin{center}\includegraphics[width=14cm]{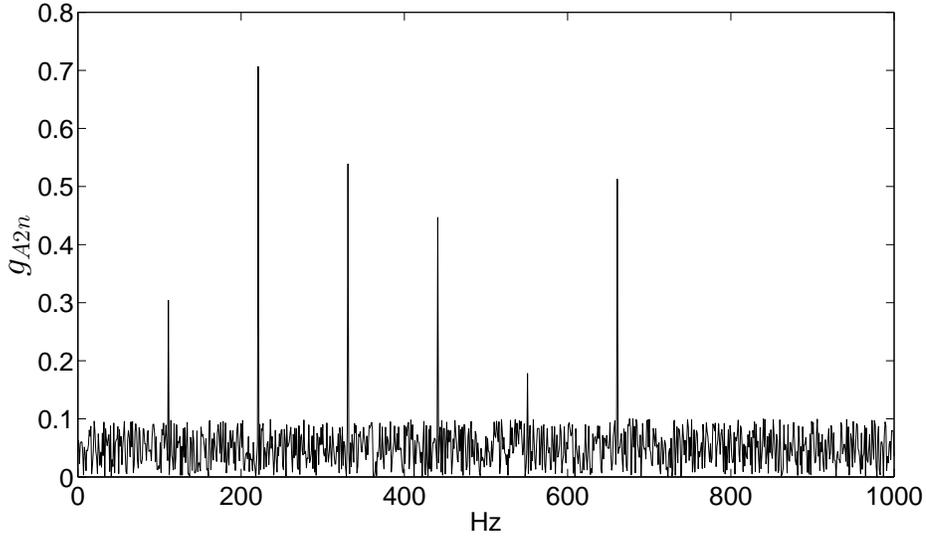}
\caption{The input signal $g_{A2n}$ is obtained by adding some noise to the first 1000 components of an input signal reproducing the tone A2.}
\label{la2noise}
\end{center}
\end{figure}
Following what was proposed in Section II.3, to recognize the tone  $g_{A2n}$ { we first build the set of reference tones $O_2$ ($n_h=2$), and then we
can evaluate the various squared norms
$\sum_{j=1}^{22050}q_{j}(g_{A2n}-O^{t})=\|g_{A2n}-O^{t} \|^2$,   for all the reference tones $O^t\in O_2$. As $g_{A2n}$ is a noised $A2$ tone,
we should expect that $\|g_{A2n}-O^{A2} \|^2<\|g_{A2n}-O^{j} \|^2$, for all $O^j\neq O^{A2}$},
but, on the contrary, the tone is not correctly recognized, as we have obtained
that the minimum value is $\|g_{A2n}-O^{A3} \|^2=3.792$, while $\|g_{A2n}-O^{A2} \|^2=3.927$.
Therefore the noised signal $g_{A2n}$ is wrongly recognized as a tone $A3$, and this is clearly due to the presence of the noise in the signal.

On the other hand, we can define $$\F_\Psi=\left\{\Psi_1=e_1, \Psi_2=\frac{1}{2}\,e_1,\Psi_3=e_2, \Psi_4=\frac{1}{2}\,e_2,...,\Psi_{44099}=e_{22050},
\Psi_{44100}=\frac{1}{2}\,e_{22050}\right\}$$
being as usually $\E=\{e_1,e_2,...,e_{22050}\}$  the canonical o.n. basis of $\Hil_{sound}$. It is easy to check that for each $f\in\Hil_{sound}$,
 $\sum_{j=1}^{44100}|<\Psi_j,f>|^2=\frac{5}{4}\|f\|^2$, which implies that $\F_\Psi$ is a tight frame with $A=\frac{5}{4}$.
Its dual frame is $\F_{\tilde\Psi}=\left\{\tilde\Psi_j=\frac{4}{5}\Psi_j,\,j=1,\ldots,44100\right\}$,
and indeed it is also  easy to check formula (\ref{a3}).
It is clear that if we want to recognize an input signal $f$ we have to check among all the possible values $\Delta(f-O^{t})$ and $\nabla(f-O^{t})$
and look for the minima. By an easy computation we have obtained that the minimum values are $\Delta(g_{A2n}-O^{A2})=0.5126$ and $\nabla(g_{A2n}-O^{A2})=0.41$,
while $\Delta(g_{A2n}-O^{A3})=0.539$ and $\nabla(g_{A2n}-O^{A3})=0.431$. This means that
the noised signal $g_{A2n}$ is correctly recognized as a tone $A2$  adopting both $\Delta$ or $\nabla$.

 Frames are therefore useful in this application as they can {overcome} the issues related to the presence of some noise to some given signal. However, it should also be observed that using the difference given by the norms sometimes work efficiently enough: in other words, most of the times the noised signals are recognized both using the norm of the difference, as in Section II, or using $\nabla$ or $\Delta$. Still sometimes, while $\nabla$ or $\Delta$ recognize the noised signal, the norm does not. This, in our opinion, makes frames important for this kind of applications.

\section{Conclusions}\label{sec:conclusion}

In this paper we have used some ideas, very common in quantum mechanics, functional analysis and harmonic analysis, to construct what we have called {\em clustering machines}, i.e. strategies useful to classify some inputs dividing them in different clusters. Also, we have shown how frames can be used when the signals are affected by some noise, as when the signal is transferred from one place to another. We believe that frames is a more natural choice, with respect to o.n. bases, exactly because of their redundancies. This is not a surprise, since overcomplete sets are quite often used exactly for taking into account a possible loss of information.

We have applied successfully our strategy to logic gates, RGB colors, and recognition of tones, both for un-noised and for noised signals, and to the  diagnosis of the celiac disease for which we have used a set of symptoms and signs as an un-noised signal, and we have compared our conclusions with those deduced adopting other clustering strategies. Concerning the next steps, we plan to compare our CMs also with other methods coming, for instance, from fuzzy logic.


\vspace{8mm}

\renewcommand{\theequation}{\Alph{section}.\arabic{equation}}

 \section*{Appendix: few facts on frames}\label{sec:appendix}

Let $A$ and $B$ be such that $0<A\leq B<\infty$, and let $\J\subseteq{\Bbb N}$.

\begin{defn} A set of vectors
$\F_\Psi=\{\Psi_j\in\Hil,\,j\in\J\}$ is an  $(A,B)-$frame if,  $\forall\,f\in\Hil$, \be A\|f\|^2\leq
\sum_{j\in\J}|<\Psi_j,f>|^2\leq B\|f\|^2.\label{a1}\en If $A=B$, the frame is called  tight.
\end{defn}

It is well known that, while all o.n. bases are tight frames with $A=1$, the converse is true only when $\|\Psi_j\|=1$ for all $j\in\J$. The theory of frames is extremely rich and elegant. The only aspect we will discuss here, which is the one really useful for us, is how a resolution of the identity can be recovered out of $\F_\Psi$. For that, the first step consists in introducing the so called {\em frame operator} $F: \Hil\rightarrow \lmin^2(\J)$ defined as follows
 \be \forall f\in\Hil \quad\Rightarrow\quad (F f)_j:=<\Psi_j,f>, \quad \forall
j\in\J.\label{a2}\en
Its adjoint turns out to be $F^\dagger c=\sum_{j\in\J}c_j\Psi_j$, for all $c=\{c_j,\,j\in\J\}\in \lmin^2(\J)$, and $F^\dagger: \lmin^2(\J)\rightarrow \Hil$. Now, since for all $f\in\Hil$,
$$\sum_{j\in\J}|<\Psi_j,f>|^2=<f,F^\dagger Ff>,$$
equation \eqref{a1} can be rewritten, more compactly, as $A\1\leq F^\dagger F\leq B\1$. This double inequality implies, in particular, that $\F_1:=F^\dagger F$ is invertible, and that its inverse is also bounded. Let us now define the new vectors $\tilde \Psi_j=\F_1^{-1} \Psi_j$, $j\in\J$, and the new {\em dual} frame $\F_{\tilde\Psi}=\{\tilde\Psi_j\in\Hil,\,j\in\J\}$. Then, calling $\tilde F=F\F_1^{-1}$, we can check that $\tilde F^\dagger F=F^\dagger \tilde F=\1$. As a consequence of these equalities, we find that
\be
f=\sum_{j\in\J}\left<\Psi_j,f\right>\tilde\Psi_j=\sum_{j\in\J}\left<\tilde\Psi_j,f\right>\Psi_j,
\label{a3}\en
which can be written, in bra-ket language, as $\sum_{j\in\J}|\Psi_j><\tilde\Psi_j|=\sum_{j\in\J}|\tilde\Psi_j><\Psi_j|=\1$. Incidentally we recall that the dual frame of $\F_{\tilde\Psi}$, $\F_{\tilde{\tilde\Psi}}$, coincides with $\F_{\Psi}$ itself.

In \cite{dau} it is discussed in some details how to construct explicitly, out of $\F_\Psi$, the vectors $\tilde\Psi_j$. The technique is perturbative, and it works well when $A$ and $B$ are very close each other. Since in this paper we just consider tight frames, the construction is much easier. In fact, in this case  $\F_1= A\,\1$, and therefore $\F_1^{-1}= \frac{1}{A}\,\1$ and
$\tilde\Psi_j=\F_1^{-1}\Psi_j=\frac{1}{A}\,\Psi_j$.

\vspace{2mm}

{\bf Example:--} let $\E={e_1,e_2,e_3}$ be the canonical o.n. basis of $\Hil={\Bbb C}^3$, and let
$$\F_\Psi=\left\{\Psi_1=e_1, \Psi_2=\frac{1}{2}\,e_1,\Psi_3=e_2, \Psi_4=\frac{1}{2}\,e_2,\Psi_5=e_3, \Psi_6=\frac{1}{2}\,e_3\right\}.$$
Let $f\in\Hil$ be a generic vector.
Since $\sum_{j=1}^6|<\Psi_j,f>|^2=\frac{5}{4}\|f\|^2$, $\F$ is a tight frame with $A=\frac{5}{4}$.
Its dual frame is $\F_{\tilde\Psi}=\left\{\tilde\Psi_j=\frac{4}{5}\Psi_j,\,j=1,\ldots,6\right\}$,
and indeed it is easy to check formula \eqref{a3}.

\end{document}